\begin{document}

\newtheorem{theorem}{Theorem}
\newtheorem{proposition}[theorem]{Proposition}
\newtheorem{lemma}[theorem]{Lemma}
\newtheorem{corollary}[theorem]{Corollary}
\newtheorem{conjecture}[theorem]{Conjecture}
\newtheorem{question}[theorem]{Question}
\newtheorem{problem}[theorem]{Problem}
\theoremstyle{definition}
\newtheorem{definition}{Definition}

\theoremstyle{remark}
\newtheorem{remark}[theorem]{Remark}

\renewcommand{\labelenumi}{(\roman{enumi})}
\def\theenumi{\roman{enumi}}


\renewcommand{\Re}{\operatorname{Re}}
\renewcommand{\Im}{\operatorname{Im}}

\def\scrA{{\mathcal A}}
\def\scrB{{\mathcal B}}
\def\scrD{{\mathcal D}}
\def\scrL{{\mathcal L}}
\def\scrS{{\mathcal S}}

\def \G {{\Gamma}}
\def \g {{\gamma}}
\def \R {{\mathbb R}}
\def \C {{\mathbb C}}
\def \Z {{\mathbb Z}}
\def \Q {{\mathbb Q}}
\def \TT {{\mathbb T}}
\newcommand{\T}{\mathbb T}
\def \GinfmodG {{\Gamma_{\!\!\infty}\!\!\setminus\!\Gamma}}
\def \GmodH {{\Gamma\setminus\H}}
\def \vol {\hbox{vol}}
\def \sl  {\hbox{SL}_2(\mathbb Z)}
\def \slr  {\hbox{SL}_2(\mathbb R)}
\def \psl  {\hbox{PSL}_2(\mathbb R)}

\newcommand{\mattwo}[4]
{\left(\begin{array}{cc}
                        #1  & #2   \\
                        #3 &  #4
                          \end{array}\right) }

\newcommand{\rum}[1] {\textup{L}^2\left( #1\right)}
\newcommand{\norm}[1]{\left\lVert #1 \right\rVert}
\newcommand{\abs}[1]{\left\lvert #1 \right\rvert}
\newcommand{\inprod}[2]{\left \langle #1,#2 \right\rangle}

\renewcommand{\i}{{\mathrm{i}}}

\newcommand{\area}{\operatorname{area}}
\newcommand{\ecc}{\operatorname{ecc}}

\newcommand{\Op}{\operatorname{Op}}
\newcommand{\dom}{\operatorname{Dom}}
\newcommand{\Dom}{\operatorname{Dom}}
\newcommand{\tr}{\operatorname{tr}}

\newcommand{\Norm}{\mathcal N}
\newcommand{\simgeq}{\gtrsim}%
\newcommand{\simleq}{\lesssim}

\newcommand{\UN}{U_N}
\newcommand{\OPN}{\operatorname{Op}_N}
\newcommand{\HN}{\mathcal H_N}
\newcommand{\TN}{T_N}  
\newcommand{\PDO}{\Psi\mbox{DO}}

\newcommand{\intinf}{\int_{-\infty}^\infty}
\newcommand{\lcm}{\operatorname{lcm}}
\newcommand{\E}{\mathbb E}
\newcommand{\Prob}{\operatorname{Prob}}
\newcommand{\Var}{\operatorname{Var}}
\newcommand{\conv}{*}

\title{A metric theory of minimal gaps}
\author{Ze\'ev Rudnick}
\address{Raymond and Beverly Sackler School of Mathematical Sciences,
Tel Aviv University, Tel Aviv 69978, Israel}
\email{rudnick@post.tau.ac.il}

\date{\today}

\begin{abstract}
We study the minimal gap statistic for fractional parts of sequences of the form $\mathcal A^\alpha = \{\alpha a(n)\}$ where $\mathcal A = \{a(n)\}$ is a sequence of distinct of integers. Assuming that the additive energy of the sequence is close to its minimal possible value, we show that for almost all $\alpha$, the minimal gap $\delta_{\min}^\alpha(N)=\min\{\alpha a(m)-\alpha a(n)\bmod 1: 1\leq m\neq n\leq N\}$ is close to that of a random sequence. \end{abstract}

\maketitle

We start with a sequence of points $\mathcal X=\{x_n:n=1,2,\dots\}\subset \R/\Z$ in the unit interval/circle, which we assume is asymptotically uniformly distributed: 
For any subinterval $\subset \R/\Z$, we have 
\begin{equation}\label{UD}
\lim_{N\to \infty} \frac 1N\#\{n\leq N: x_n\in I\} = |I| \;.
\end{equation}
In particular, the mean spacing between the points lying in any subinterval is $1/N$. Our goal is to understand the {\em minimal gap} 
$$ \delta_{\min}(\mathcal X,N) = \min(|x_n-x_m|: n,m\leq N, n\neq m)$$
(with  a suitable  modification for wrapping around). 

For {\em random} points,  namely  $N$ independent uniform points in the unit interval (Poisson process), the minimal gap is almost surely of size $1/N^2$ \cite{Levy}. In this note we study the metric theory of the minimal gap statistic for a class of deterministic sequences of fractional parts, such as fractional parts of polynomials. The case of quadratic polynomials $x_n =\alpha n^2$ has its roots in the recent paper \cite{BBRR}, which studies the more complicated case of the minimal gap statistic for the sequence of eigenvalues of the Laplacian on a rectangular billiard, namely the points 
$\{\alpha m^2+n^2 : m,n\geq 1\}$ on the real line. 

We fix   a sequence   $\mathcal A=\{a(n):n=1,2,\dots\}\subset \mathbb Z$ of distinct integers ($a(n)\neq a(m)$ if $m\neq m$), and study the minimal gap statistic of fractional parts of the set 
$$\mathcal A^\alpha = \{\alpha a(n) \bmod 1:n=1,2, \dots\}\subset \R/\Z \;.
$$
(it is an old result of Weyl that $\mathcal A^\alpha$ satisfies \eqref{UD} for almost all $\alpha$). 
We want to know under which conditions we can show that for {\em almost all} $\alpha$, the minimal gap statistics 
$$\delta_{\min}^\alpha(N)=\delta_{\min}( \mathcal A^\alpha,N)$$ 
follows that of the random case, that is of size about $1/N^2$ for almost all $\alpha$.  
It is easy to see that we cannot have much smaller minimal gaps:
\begin{theorem}\label{thm delta big}
Assume that $\mathcal A$ consists of distinct integers.  Then for all  $\eta>0$, for almost all $\alpha$, 
$$
\delta_{\min}^\alpha(N)>\frac 1{N^{2+\eta}},\quad \forall N>N_0(\alpha) \;.
$$
\end{theorem}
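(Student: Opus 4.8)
The plan is to reduce everything to a single clean fact about the Lebesgue measure of the set of $\alpha$ for which a fixed nonzero integer multiple of $\alpha$ lands near an integer, and then to run a Borel--Cantelli argument. First I would reformulate the minimal gap. On the circle $\R/\Z$ the distance between the two points $\{\alpha a(m)\}$ and $\{\alpha a(n)\}$ is exactly $\|\alpha(a(m)-a(n))\|$, where $\|t\|$ denotes the distance from $t$ to the nearest integer, so that
$$
\delta_{\min}^\alpha(N)=\min_{1\le m\neq n\le N}\big\|\alpha\,(a(m)-a(n))\big\| \;.
$$
Because the $a(n)$ are distinct, every difference $k=a(m)-a(n)$ appearing here is a nonzero integer. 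Thus a small minimal gap for a given $\alpha$ means that $\|\alpha k\|$ is small for some nonzero integer $k$ arising from a pair $m\neq n\le N$.

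The key estimate is that for every nonzero integer $k$ and every $0<\epsilon\le 1/2$,
$$
\big|\{\alpha\in[0,1):\ \|\alpha k\|\le \epsilon\}\big| = 2\epsilon \;,
$$
\emph{uniformly in $k$}: the function $\alpha\mapsto\|\alpha k\|$ has period $1/|k|$, and on each of the $|k|$ full periods inside $[0,1)$ the sublevel set $\{\|\alpha k\|\le\epsilon\}$ has measure $2\epsilon/|k|$. The crucial point is that this bound does not depend on the size of $k$, so the fact that the $a(n)$ may grow arbitrarily fast is irrelevant. Taking a union bound over the at most $N(N-1)<N^2$ ordered pairs $(m,n)$ gives, for $\epsilon\le 1/2$,
$$
\big|\{\alpha\in[0,1):\ \delta_{\min}^\alpha(N)\le \epsilon\}\big|\ \le\ 2N^2\epsilon \;.
$$

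The main obstacle is that setting $\epsilon=N^{-2-\eta}$ yields the bound $2N^{-\eta}$, whose sum over $N$ diverges when $0<\eta\le 1$, so a naive Borel--Cantelli over all $N$ fails. I would resolve this by sparsification combined with monotonicity: the quantity $\delta_{\min}^\alpha(N)$ is non-increasing in $N$, since enlarging the point set can only shrink the minimal gap. Working along the geometric subsequence $N_j=2^j$, I define the bad events
$$
B_j=\big\{\alpha\in[0,1):\ \delta_{\min}^\alpha(N_{j+1})\le N_j^{-2-\eta}\big\},
$$
for which the estimate above gives $|B_j|\le 2\,N_{j+1}^2\,N_j^{-2-\eta}=O(2^{-\eta j})$, a convergent series. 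By Borel--Cantelli, for almost all $\alpha$ only finitely many $B_j$ occur, so $\delta_{\min}^\alpha(N_{j+1})>N_j^{-2-\eta}$ for all large $j$. Finally, for any $N\in[N_j,N_{j+1})$ monotonicity gives $\delta_{\min}^\alpha(N)\ge\delta_{\min}^\alpha(N_{j+1})>N_j^{-2-\eta}\ge N^{-2-\eta}$, which is the desired conclusion for all $N>N_0(\alpha)$. The only subtlety to check is that the $O(2^{-\eta j})$ bound really is summable for every fixed $\eta>0$ and that the interpolation across each dyadic block loses nothing, both of which are immediate from the inequalities above.
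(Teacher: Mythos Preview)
Your proof is correct. The structure---a first-moment bound on $\Prob(\delta_{\min}^\alpha(N)\le\epsilon)$, sparsification to a subsequence, Borel--Cantelli, and monotone interpolation---is exactly the paper's strategy. The difference is in how the first moment is obtained: you compute directly that $\abs{\{\alpha:\|\alpha k\|\le\epsilon\}}=2\epsilon$ and take a union bound over the $N(N-1)$ pairs, whereas the paper reuses its smoothed bilinear statistic $D_f(N,M)$, observing that $\delta_{\min}^\alpha(N)\le 1/(4M)$ forces $D_f(N,M)\ge 1$, and then applies Markov's inequality together with the expected-value computation $\int_0^1 D_f(N,M)\,d\alpha=N(N-1)/M$. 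These two routes are equivalent (the paper's expected value \emph{is} your union bound, smoothed); the paper takes its path only because $D_f$ is already in hand for the harder direction. Your version is more elementary and self-contained. Incidentally, you sparsify geometrically with $N_j=2^j$ while the paper uses the polynomial subsequence $N_k=\lfloor k\rfloor^{2/\eta}$; either choice works and yours is arguably cleaner.
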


To make the minimal gap small, we give a criterion in terms of the ``additive energy" $E(\mathcal A,N) $ of the sequence: 
$$
E(\mathcal A,N) :=\#\{(n_1,n_2,n_3,n_4)\in [1,N]^4: a(n_1)+a(n_2) = a(n_3)+a(n_4) \}  \;.
$$ 
Note that $N^2\leq E(\mathcal A,N) \leq N^3$. 
The result is
\begin{theorem}\label{thm min}
Assume that $\mathcal A$ consists of distinct integers, and that the additive energy satisfies
$$ E(\mathcal A,N)\ll N^{2+o(1)},\quad \forall N\gg 1\;.
$$
Then  for all $\eta>0$, for almost all $\alpha$, 
$$
      \delta_{\min}^\alpha(N)<\frac 1{N^{2-\eta}},\quad \forall N>N_0(\alpha) \;.
$$
\end{theorem}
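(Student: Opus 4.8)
The plan is to run a second–moment (variance) argument on the number of small gaps, with the additive energy controlling the dominant part of the variance. Write $\|t\|$ for the distance from $t$ to the nearest integer and $e(t)=e^{2\pi\i t}$, and fix a window $\delta=\delta_N=N^{-(2-\eta')}$ with $0<\eta'<\eta$. Introduce the counting function
\begin{equation*}
\mathcal N_\alpha(N,\delta)=\#\{(m,n): 1\le m\neq n\le N,\ \|\alpha(a(m)-a(n))\|<\delta\}=\sum_{k\neq 0}r(k)\,\mathbf 1\big[\,\|k\alpha\|<\delta\,\big],
\end{equation*}
where $r(k)=\#\{(m,n): m\neq n\le N,\ a(m)-a(n)=k\}$. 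Then $\delta_{\min}^\alpha(N)<\delta$ if and only if $\mathcal N_\alpha(N,\delta)\ge 1$, so it suffices to show that for almost all $\alpha$ one has $\mathcal N_\alpha(N,\delta_N)\ge 1$ for all large $N$.

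First I would compute the mean: since $\int_0^1\mathbf 1[\|k\alpha\|<\delta]\,d\alpha=2\delta$ for every $k\neq 0$, we get $\E\,\mathcal N_\alpha(N,\delta)=2\delta\,N(N-1)\asymp N^{\eta'}\to\infty$. The heart of the matter is the variance $\Var(\mathcal N_\alpha)=\sum_{k_1,k_2\neq0}r(k_1)r(k_2)\,\mathrm{Cov}\big(\mathbf1[\|k_1\alpha\|<\delta],\mathbf1[\|k_2\alpha\|<\delta]\big)$. The diagonal contribution, from $k_1=\pm k_2$, is where additive energy enters: each such covariance is $2\delta-4\delta^2\le 2\delta$, and $\sum_k r(k)^2\le E(\mathcal A,N)$ because $a(m_1)-a(n_1)=a(m_2)-a(n_2)$ is the equation $a(m_1)+a(n_2)=a(m_2)+a(n_1)$ defining the energy. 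Hence the diagonal part is $\ll\delta\,E(\mathcal A,N)\ll N^{\eta'+o(1)}$, which is $o\big((\E\,\mathcal N_\alpha)^2\big)=o(N^{2\eta'})$ by the hypothesis $E\ll N^{2+o(1)}$. Conceptually this is the identity $\int_0^1|S_j(\alpha)|^4\,d\alpha=E(\mathcal A,N)$ for $j\neq0$, where $S_j(\alpha)=\sum_{n\le N}e(j\,a(n)\alpha)$: near–minimal energy forces $|S_j|^2$ to fluctuate as little as a sum of roughly independent terms.

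The main obstacle is the off–diagonal part, $k_1\neq\pm k_2$. Here a Fourier (or elementary geometric) estimate gives the decay
\begin{equation*}
\Big|\mathrm{Cov}\big(\mathbf1[\|k_1\alpha\|<\delta],\mathbf1[\|k_2\alpha\|<\delta]\big)\Big|\ll \delta\,\frac{\gcd(k_1,k_2)}{\min(|k_1|,|k_2|)},
\end{equation*}
so that the off–diagonal variance is bounded by the weighted $\gcd$–sum $\delta\sum_{k_1\neq\pm k_2}r(k_1)r(k_2)\,\gcd(k_1,k_2)/\min(|k_1|,|k_2|)$ over the difference set of $\mathcal A$. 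I expect this to be the delicate step: one must show it is $o(N^{2\eta'})$, exploiting both the energy bound (which limits how concentrated the differences can be) and the fact that the differences are spread over a long range, so that large common factors $\gcd(k_1,k_2)$ are rare. Grouping by $d=\gcd(k_1,k_2)$, writing $k_i=d\ell_i$, and summing the residue counts $\#\{(m,n):d\mid a(m)-a(n)\}$ should reduce this to a G\'al–type sum of lower order; controlling it is where the real work lies, and a smooth minorant $\psi_\delta\le\mathbf 1_{(-\delta,\delta)}$ with rapidly decaying Fourier coefficients may be convenient to guarantee absolute convergence.

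Granting $\Var(\mathcal N_\alpha(N,\delta_N))=o(N^{2\eta'})$, Chebyshev's inequality yields
\begin{equation*}
\mathrm{meas}\{\alpha\in[0,1]:\mathcal N_\alpha(N,\delta_N)=0\}\le\frac{\Var(\mathcal N_\alpha)}{(\E\,\mathcal N_\alpha)^2}\ll N^{-\eta'+o(1)} .
\end{equation*}
Since this decays like a power, I would pass to a sparse sequence $N_j$ with $N_{j+1}/N_j\to 1$ but $\sum_j N_j^{-\eta'+o(1)}<\infty$ (e.g.\ $N_j=\lceil j^{3/\eta'}\rceil$); Borel--Cantelli then gives, for almost all $\alpha$, that $\delta_{\min}^\alpha(N_j)<N_j^{-(2-\eta')}$ for all large $j$. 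Finally, since $\delta_{\min}^\alpha(N)$ is non-increasing in $N$, for $N\in[N_j,N_{j+1})$ one has $\delta_{\min}^\alpha(N)\le\delta_{\min}^\alpha(N_j)<N_j^{-(2-\eta')}$, and the condition $N_{j+1}/N_j\to1$ together with $\eta'<\eta$ upgrades this to $\delta_{\min}^\alpha(N)<N^{-(2-\eta)}$ for all $N>N_0(\alpha)$, as required.
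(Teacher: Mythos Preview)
Your overall architecture---a second-moment bound on a gap-counting statistic, Chebyshev, Borel--Cantelli along a polynomially sparse subsequence, and monotonicity of $\delta_{\min}^\alpha(N)$ to fill in---is exactly the paper's. The genuine gap is the one you flag yourself: the off-diagonal part of the variance. You do not close it, and the route you sketch (grouping by $d=\gcd(k_1,k_2)$ and controlling residue counts $\#\{(m,n):d\mid a(m)-a(n)\}$) is not how the paper proceeds, and is unlikely to succeed from the energy hypothesis alone, since $E(\mathcal A,N)\ll N^{2+o(1)}$ gives no direct control on divisibility of differences.

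The paper's resolution is different in two ways. First, by working with a smooth test function and Cauchy--Schwarz on the Fourier side, one obtains the sharper covariance weight $\delta\,\gcd(k_1,k_2)/\sqrt{|k_1k_2|}$ rather than your $\delta\,\gcd(k_1,k_2)/\min(|k_1|,|k_2|)$. This matters: for the weight $\gcd/\min$ there is no G\'al-type inequality of the required strength (on a geometric progression of length $K$ the sum is of order $K^2$, not $K^{1+o(1)}$). Second, with the weight $\gcd/\sqrt{|k_1k_2|}$ one can invoke the GCD-sum bound of Dyer--Harman,
\[
\sum_{k_1,k_2\neq 0} r(k_1)r(k_2)\,\frac{\gcd(k_1,k_2)}{\sqrt{|k_1k_2|}}\;\ll\; \exp\!\Big(\frac{C\log N}{\log\log N}\Big)\sum_{k} r(k)^2\;\le\; N^{o(1)}\,E(\mathcal A,N),
\]
which handles the diagonal and off-diagonal simultaneously and yields $\Var\ll \delta\,N^{o(1)}E(\mathcal A,N)\ll N^{\eta'+o(1)}$, exactly the input your Chebyshev/Borel--Cantelli step needs. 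So the missing idea is: replace the indicator by a smooth minorant to get the $\gcd/\sqrt{|k_1k_2|}$ weight, and then quote the Dyer--Harman (or Bondarenko--Seip) GCD-sum inequality rather than attempting an ad hoc divisor-grouping argument.
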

 Examples:  
For $a(n)=n^d$, $d\geq 2$, it is shown in \cite{RS} that the $E(\mathcal A,N)\ll N^{2+o(1)}$. For lacunary sequences, we have   $E(\mathcal A,N)\ll N^2$  \cite{RZ}.  Hence Theorem~\ref{thm min} applies to these sequences.

Relaxing the required bound on the additive energy will give a weaker result on the minimal spacing, basically that $\delta_{\min}^{\alpha}(N)<E(\mathcal A,N)/N^{4-\eta}$ almost surely. For this to be nontrivial, we need the additive energy to be no bigger than $E(\mathcal A,N)\ll N^{3-\eta'}$ for some $\eta'>0$. A notable case where the additive energy is bigger is that of $\mathcal A=\mathcal P$ being the sequence of primes, where $E(\mathcal P,N) \approx N^3/\log N$.  
In this case, we cannot have gaps much larger than the average gap: A simple argument shows that given any $\varepsilon>0$, for almost all $\alpha$, we have $\delta_{\min}(\mathcal P^\alpha,N)\gg 1/(N(\log N)^{2+\varepsilon})$, see \S~\ref{sec:dense}.



\noindent{\bf Acknowledgements:} We thank Andrew Granville and Niclas Technau for their comments. 
The work was supported  by the European Research Council under the European Union's Seventh
Framework Programme (FP7/2007-2013)/ERC grant agreement n$^{\text{o}}$ 320755.   
 
\section{A bilinear statistic }

To study the minimal gap, we introduce a statistics counting all possible gaps: 
We start with a smooth, compactly supported window function $f\in C_c^\infty([-1/2,1/2])$, which is non-negative: $f\geq 0$, 
and of unit mass $\int f(x)dx=1$, and define 
$$F_M(x) = \sum_{j\in \Z} f(M(x+j))$$
which is localized on the scale of $1/M$, and periodic: $F_M(x+1) = F_M(x)$. We then set 
$$
D_{\mathcal A}(N,M)(\alpha) = \sum_{\substack{1\leq m,n\leq N\\m\neq n}} F_M(\alpha a(n)-\alpha a(m)) \;.
$$

The expected value of $D_{\mathcal A}(N,M)$ is easily seen to equal 
\begin{equation}\label{E(D)} 
\int_0^1 D_{\mathcal A}(N,M)(\alpha)d\alpha = \frac{N(N-1)}{M}\sim \frac{N^2}{M}\;.
\end{equation}
This already suffices to show that minimal gaps cannot typically be small (Theorem~\ref{thm delta big}), see \S~\ref{sec:lower}.

We will bound the  variance of $D_{\mathcal A}(N,M)$, from which Theorem~\ref{thm min} will follow: 
\begin{proposition}\label{prop var}
\begin{equation*}
\Var D_{\mathcal A}(N,M) \ll \frac{1}{M} N^{\epsilon}E(\mathcal A,N)  \;.
\end{equation*}
\end{proposition}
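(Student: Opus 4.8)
The plan is to open everything up by Fourier analysis and reduce the variance to a counting problem that is controlled by the additive energy. Since $F_M$ is $1$-periodic, I would expand it in a Fourier series $F_M(x)=\sum_{k\in\Z}\hat F_M(k)\,e(kx)$, where $e(x)=e^{2\pi\i x}$ and $\hat F_M(k)=\frac1M\hat f(k/M)$; because $f\in C_c^\infty$ its transform $\hat f$ is rapidly decaying, so $\hat F_M(k)$ is negligible once $|k|\gg M^{1+\epsilon}$, while $|\hat F_M(k)|\le 1/M$ throughout and $\hat f(0)=\int f=1$. Writing $b=a(n)-a(m)$, this turns the statistic into $\sum_{m\neq n}\sum_k \hat F_M(k)\,e(k(a(n)-a(m))\alpha)$; the $k=0$ term recovers the mean \eqref{E(D)}, so that $D_{\mathcal A}(N,M)-\E D_{\mathcal A}(N,M)=\sum_{m\neq n}\sum_{k\neq0}\hat F_M(k)\,e(kb\alpha)$.

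Next I would compute the variance by squaring and integrating in $\alpha$, using the orthogonality relation $\int_0^1 e(j\alpha)\,d\alpha=\delta_{j,0}$. This kills every term except those obeying the linear relation $k_1 b_1=k_2 b_2$ with $b_i=a(n_i)-a(m_i)\neq0$, and yields
\[
\Var D_{\mathcal A}(N,M)=\sum_{k_1,k_2\neq0}\hat F_M(k_1)\overline{\hat F_M(k_2)}\,R(k_1,k_2),
\]
where $R(k_1,k_2)$ counts quadruples $(n_1,m_1,n_2,m_2)$ with $k_1(a(n_1)-a(m_1))=k_2(a(n_2)-a(m_2))$ and both differences nonzero. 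Introducing the difference multiplicity $r(h)=\#\{(i,j)\in[1,N]^2:a(i)-a(j)=h\}$, so that $E(\mathcal A,N)=\sum_h r(h)^2$, I would group terms by the common value $\ell=k_1 b_1=k_2 b_2$. Since $b_i=\ell/k_i$ forces $k_i\mid\ell$, the sum factorizes, and taking absolute values gives
\[
\bigl|\Var D_{\mathcal A}(N,M)\bigr|\le\frac1{M^2}\sum_{\ell\neq0}G(\ell)^2,\qquad G(\ell):=\sum_{k\mid\ell}\Bigl|\hat f\bigl(\tfrac kM\bigr)\Bigr|\,r(\ell/k).
\]

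The heart of the matter is to bound $\sum_\ell G(\ell)^2$ by $N^{\epsilon}M\,E(\mathcal A,N)$. Here I would apply Cauchy--Schwarz to $G(\ell)$, paying a factor equal to the number of divisors of $\ell$: $G(\ell)^2\le d(\ell)\sum_{k\mid\ell}|\hat f(k/M)|^2 r(\ell/k)^2$. Bounding $d(\ell)=N^{o(1)}$ uniformly, one pulls the divisor factor out, interchanges summation, and uses $\sum_k|\hat f(k/M)|^2\ll M$ (a Riemann sum for $\norm{f}^2$) together with $\sum_{\ell:\,k\mid\ell} r(\ell/k)^2=\sum_h r(h)^2=E(\mathcal A,N)$, to obtain
\[
\sum_\ell G(\ell)^2\ll N^{o(1)}\Bigl(\sum_k|\hat f(k/M)|^2\Bigr)E(\mathcal A,N)\ll N^{\epsilon}M\,E(\mathcal A,N),
\]
whence $\Var D_{\mathcal A}(N,M)\ll N^{\epsilon}E(\mathcal A,N)/M$, which is Proposition~\ref{prop var}.

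The step I expect to be the real obstacle is precisely the divisor bound: the uniform estimate $d(\ell)=N^{o(1)}$ is exactly what converts the ``off-diagonal'' contributions (those with $k_1\neq k_2$, equivalently $|b_1|\neq|b_2|$) into the same size $E(\mathcal A,N)/M$ as the diagonal main term $k_1=k_2$. If one instead uses only the trivial uniform bound $R(k_1,k_2)\le E(\mathcal A,N)$, then the weight $\bigl(\sum_k|\hat F_M(k)|\bigr)^2$ has size $O(1)$ rather than $O(1/M)$ and the crucial factor $1/M$ is lost; it is therefore essential to exploit the divisor structure above, or equivalently that a generic pair $(k_1,k_2)$ makes the rigid relation $k_1 b_1=k_2 b_2$ nearly solution-free. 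Controlling $d(\ell)$ — in effect the divisor-weighted energy $\sum_h d(h)\,r(h)^2\ll N^{\epsilon}E(\mathcal A,N)$ — carries the whole argument, and is where the polynomial size of the $a(n)$ (as for $a(n)=n^d$) enters, ensuring the relevant $\ell=kh$ with $|k|\lesssim M^{1+\epsilon}$ and $h$ a difference stay of size $N^{O(1)}$.
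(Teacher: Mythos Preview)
Your Fourier-analytic setup and the reduction to counting solutions of $k_1b_1=k_2b_2$ are exactly as in the paper, and your regrouping by the common value $\ell=k_1b_1$ followed by Cauchy--Schwarz is a clean alternative route. The difficulty is the one you yourself flag at the end: the uniform bound $d(\ell)=N^{o(1)}$ requires $|\ell|\le N^{O(1)}$, hence $\max_{n\le N}|a(n)|\le N^{O(1)}$. But Proposition~\ref{prop var} is stated (and applied) with no growth hypothesis on $\mathcal A$ whatsoever; indeed one of the paper's motivating examples is a lacunary sequence, for which $|a(n)|$ may be doubly exponential in $n$ and $d(\ell)$ is certainly not $N^{o(1)}$. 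So as written your argument proves the proposition only under an additional assumption that excludes a case the paper explicitly wants to cover; for general $\mathcal A$ the step ``$d(\ell)\ll N^\epsilon$'' simply fails.

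The paper gets around this by reversing the order of summation: for fixed $v_1,v_2$ it first sums over $k_1,k_2$, proving the size-free lemma
\[
\sum_{k_1,k_2\neq 0}\frac{1}{M^2}\,\widehat f\Bigl(\frac{k_1}{M}\Bigr)\widehat f\Bigl(\frac{k_2}{M}\Bigr)\,\delta(k_1v_1=k_2v_2)\ \ll_f\ \frac{1}{M}\cdot\frac{\gcd(v_1,v_2)}{\sqrt{|v_1v_2|}},
\]
and then invokes a G\'al/Dyer--Harman GCD-sum inequality
\[
\sum_{v_1,v_2}R(v_1)R(v_2)\,\frac{\gcd(v_1,v_2)}{\sqrt{|v_1v_2|}}\ \ll\ \exp\!\Bigl(\frac{C\log N}{\log\log N}\Bigr)\sum_v R(v)^2.
\]
The key point is that the constant in this last bound depends only on the \emph{number} of $v$'s with $R(v)\neq 0$ (at most $N(N-1)$), not on their magnitude. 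This is precisely what replaces your divisor bound and makes the estimate uniform over all sequences of distinct integers. Under your extra polynomial-growth hypothesis the two arguments are essentially equivalent, but in general the GCD-sum route is what is needed.
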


The statistic $D_{\mathcal A}(N,M)(\alpha)$ is related to the pair correlation function of the sequence $\mathcal A^\alpha$, which in our notation is $D_{\mathcal A}(N,N)(\alpha)/N$. Pair correlation measures gaps on the scale of the mean spacing, assumed here to be $1/N$, corresponding to $M=N$, here we are looking at much smaller scales of $M$ close to $N^2$. 

The metric theory of the pair correlation function of fractional parts was initiated in \cite{RS}, where the sequences $a(n) = \alpha n^d$ were shown to almost surely have Poissonian pair correlation for $d\geq 2$ (see \cite{MS, HB} for different proofs of the quadratic case $d=2$). The problem has since been studied in several other cases  and  has recently been revived   in an abstract setting \cite{ALL, Walker,  LT, BCGW}. In particular, a convenient criterion for almost sure Poisson pair correlation has been formulated by Aistleitner,  Larcher and Lewko \cite{ALL} in terms of the additive energy $E(\mathcal A,N)$ of the sequence. 
The proof of Proposition~\ref{prop var} is close to that of the analogous statement for the pair correlation function in \cite{ALL}, which in turn is based on \cite{RS, RZ}. 

\begin{proof}


The Fourier expansion of $F_M(x)$ is 
\begin{equation}
F_M(x) = \sum_{k\in \Z} \frac 1M \widehat{f}(\frac kM) e(kx)
\end{equation}
where $\widehat{f}(y) = \int_{-\infty}^\infty f(x)e^{-2\pi i xy }dx$. Inserting into the definition of $D(N,M)$ gives 
\begin{equation}
D(N,M)(\alpha) =\sum_{k\in \Z}  \frac 1M \widehat{f}(\frac kM)  \sum_{\substack{1\leq m,n\leq N\\m\neq n}}  e(k\alpha(a(m)-a(n)) \;.
\end{equation}
Integrating over $\alpha$ gives the expected value (we assume $a(m)\neq a(n)$ if $n\neq m$) 
$$
\int_0^1 D(N,M)(\alpha)d\alpha =\widehat{f}(0)\frac 1M N(N-1) =\frac{N(N-1)}{M} \;.
$$

The variance is the second moment of the sum over nonzero frequencies:
$$
\Var D(N,M) = \int_0^1 \Big| \sum_{0\neq k\in Z}  \frac 1M \widehat{f}(\frac kM)  \sum_{\substack{1\leq m,n\leq N\\m\neq n}}  e(k\alpha(a(m)-a(n)) \Big|^2 d\alpha \;.
$$
Squaring out and integrating gives 
\begin{multline*}
\Var D(N,M) = 
 \sum_{k_1,k_2\neq 0} \frac 1{M^2}  \widehat{f}(\frac {k_1}M) \widehat{f}(\frac {k_2}M) \times 
\\
 \#\{m_1\neq n_1, m_2\neq n_2: k_1(a(m_1)-a(n_1)) = k_2 a(m_2)-a(n_2))\} \;.
\end{multline*}
We now follow \cite[Lemma 3]{ALL} to convert this to ``GCD sums". Let 
$$
R(v) = \#\{1\leq m\neq n\leq N: a(m)-a(n) = v\} \;.
$$ 
Then 
$$
\Var D(N,M) = \sum_{ v_1,v_2\neq 0} \sum_{k_1,k_2\neq 0} c(k_1)c(k_2)  R(v_1)R(v_2)\delta(k_1v_1=k_2v_2)
$$
where we set 
$$ c(k) :=\frac 1M \widehat{f}(\frac kM) \;.
$$

\begin{lemma}
Let $f\in C_c^\infty(\R)$. For any nonzero integers $v_1, v_2\neq 0$, 
\begin{equation}\label{improved gcd}
\sum_{k_1,k_2\neq 0} c(k_1)c(k_2)   \delta(k_1v_1=k_2v_2)
\ll_f   \frac 1M \frac{\gcd(v_1,v_2)}{\sqrt{|v_1v_2|}} 
\;.
\end{equation}
\end{lemma}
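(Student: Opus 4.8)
The plan is to first use the linear constraint $k_1 v_1 = k_2 v_2$ to collapse the double sum into a single sum, and then to estimate that sum using the rapid decay of $\widehat{f}$. Write $d = \gcd(v_1,v_2)$ and $v_i = d w_i$ with $\gcd(w_1,w_2)=1$; since $|v_1 v_2| = d^2|w_1 w_2|$ we have $\gcd(v_1,v_2)/\sqrt{|v_1 v_2|} = 1/\sqrt{|w_1 w_2|}$, so the asserted bound is equivalent to
\[
\sum_{k_1,k_2\neq 0} c(k_1)c(k_2)\,\delta(k_1 v_1 = k_2 v_2) \ll_f \frac{1}{M\sqrt{|w_1 w_2|}}.
\]
Dividing $k_1 v_1 = k_2 v_2$ by $d$ gives $k_1 w_1 = k_2 w_2$, and coprimality of $w_1,w_2$ forces $(k_1,k_2) = (w_2 t, w_1 t)$ for some integer $t$, with $k_1,k_2\neq 0$ equivalent to $t\neq 0$. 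Hence the left-hand side equals
\[
\frac{1}{M^2}\sum_{t\neq 0}\widehat{f}\!\left(\frac{w_2 t}{M}\right)\widehat{f}\!\left(\frac{w_1 t}{M}\right),
\]
and it suffices to bound this absolutely.

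Next I would apply Cauchy--Schwarz in $t$ to separate the two factors, reducing matters to the diagonal sums $\sum_{t\neq 0}|\widehat{f}(w_i t/M)|^2$ for $i=1,2$. The key input is the elementary estimate
\[
\sum_{t\neq 0}\Big|\widehat{f}\!\left(\frac{w t}{M}\right)\Big|^2 \ll_f \frac{M}{|w|},\qquad w\neq 0,
\]
valid for every nonzero integer $w$. Granting this, Cauchy--Schwarz yields
\[
\sum_{t\neq 0}\Big|\widehat{f}\!\left(\tfrac{w_1 t}{M}\right)\widehat{f}\!\left(\tfrac{w_2 t}{M}\right)\Big| \ll_f \left(\frac{M}{|w_1|}\right)^{1/2}\left(\frac{M}{|w_2|}\right)^{1/2} = \frac{M}{\sqrt{|w_1 w_2|}},
\]
and dividing by $M^2$ reproduces exactly the claimed $\tfrac{1}{M}\gcd(v_1,v_2)/\sqrt{|v_1 v_2|}$.

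It remains to prove the diagonal estimate, and this is where the one genuine point of care lies. The points $\{w t/M : t\neq 0\}$ form an arithmetic progression of spacing $|w|/M$, so when $|w|\le M$ the sum is essentially the Riemann sum $\tfrac{M}{|w|}\int |\widehat{f}(y)|^2\,dy = \tfrac{M}{|w|}\|\widehat{f}\|_2^2 \ll M/|w|$; making this rigorous uses that $|\widehat{f}|^2$ is Schwartz (as $f\in C_c^\infty$), so the sum differs from the scaled integral only by a rapidly decaying tail. The regime $|w|>M$, where the spacing exceeds $1$ and the Riemann-sum heuristic breaks down, is the main obstacle to a \emph{uniform} statement: there every nonzero term has argument $|w t/M|\ge |w|/M > 1$, so the rapid decay of $\widehat{f}$ gives $\sum_{t\neq 0}|\widehat{f}(wt/M)|^2 \ll_{f,A} (|w|/M)^{-A}$, which is $\ll M/|w|$ for any $A\ge 1$. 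Combining the two regimes yields the diagonal estimate for all $w\neq 0$ and completes the proof. (One could equally bound one factor by $\|\widehat{f}\|_\infty$ and sum the other, since $\sum_{t\neq 0}|\widehat{f}(w_2 t/M)| \ll M/|w_2| \le M/\sqrt{|w_1 w_2|}$ whenever $|w_2|\ge|w_1|$; but the Cauchy--Schwarz route produces the symmetric bound directly.)
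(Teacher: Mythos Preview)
Your proof is correct and follows essentially the same route as the paper: parametrize the solutions of $k_1v_1=k_2v_2$ by a single integer parameter, apply Cauchy--Schwarz to separate the two Fourier factors, and bound the resulting diagonal sum $\sum_{t\neq 0}|\widehat f(at)|^2\ll_f 1/a$ by a Riemann-sum argument when the spacing $a$ is small and by the decay of $\widehat f$ when $a$ is large. The only difference is cosmetic (you write the spacing as $|w|/M$ with $w=v_i/\gcd(v_1,v_2)$, the paper as $a_i=v_i/(M\gcd(v_1,v_2))$).
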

\begin{proof}
For $k_1,k_2\neq 0$, 
we have $k_1v_1=k_2 v_2$ if and only if 
$$(k_1,k_2) = \ell (\frac{v_2}{\gcd(v_1,v_2)}, \frac{v_1}{\gcd(v_1,v_2)})
$$
for some nonzero integer $0\neq \ell \in \Z$. Abbreviating $a_i  = v_i/(M\gcd(v_1,v_2))$, we find  
\begin{equation*}
\begin{split}
\sum_{k_1,k_2\neq 0} c(k_1)c(k_2)\delta(k_1v_1=k_2v_2) 
&= \sum_{0\neq \ell\in \Z} c(\ell \frac{v_2}{\gcd(v_1,v_2)} ) c(\ell \frac{v_1}{\gcd(v_1,v_2)}) 
 \\& 
= \frac 1{M^2}  \sum_{0\neq \ell\in \Z}\widehat{f}(a_2\ell ) \widehat{f}(a_1 \ell ) 
\end{split}
\end{equation*}
so that it suffices to show that 
\begin{equation}\label{improved gcd2}
  \sum_{0\neq \ell\in \Z}\widehat{f}(a_1 \ell  ) \widehat{f}(a_2 \ell  ) \ll_f \frac 1{\sqrt{|a_1a_2|}} = \frac{M\gcd(v_1,v_2)}{\sqrt{|v_1v_2|}} \;.
\end{equation}

Applying Cauchy-Schwarz   we get 
$$
\sum_{0\neq \ell\in \Z}\widehat{f}(\ell a_1) \widehat{f}(\ell a_2 ) \leq \Big(\sum_{\ell\neq 0} \widehat{f}(a_1\ell)^2\Big)^{1/2} \Big(\sum_{\ell \neq 0} \widehat{f}(a_2\ell)^2\Big)^{1/2} \;.
$$
We will obtain \eqref{improved gcd2} if we show that for any $a>0$, 
$$
\sum_{\ell \neq 0} \widehat{f}(a\ell)^2 \ll_f \frac 1 a  \;.
$$
Indeed, if $0<a\ll 1$ then we get a Riemann sum for $(\widehat{f}\;)^2$:
$$
\sum_{\ell \neq 0} \widehat{f}(a\ell)^2\sim \frac 1a \intinf \widehat{f}(y)^2 dy =  \frac 1a \intinf f(x)^2 dx \;.
$$
If $ a\gg 1$ then use the decay rate of the Fourier transform: For $y\neq 0$, 
$$|\widehat{f}(y)|\leq \frac 1{2\pi |y|}\intinf|f'(x)|dx \;,
$$
to obtain
$$
\sum_{\ell \neq 0} \widehat{f}(a\ell)^2  \ll  \sum_{\ell\neq 0} (\frac {\intinf |f'(x)|dx}{ |a\ell|})^2 \ll_f \frac 1{a^2}
$$
which for $a\gg 1$ is $\ll 1/a$. 
\end{proof}

   Hence 
$$
\Var D(N,M)\ll     \frac 1M\sum_{ v_1,v_2\neq 0}  R(v_1)R(v_2)\frac{\gcd(v_1,v_2)}{\sqrt{|v_1v_2|}} \;.
$$
According to the GCD bounds of \cite{DH},   
$$
\sum_{ v_1,v_2\neq 0}  R(v_1)R(v_2)\frac{\gcd(v_1,v_2)}{\sqrt{|v_1v_2|}}  \ll   \exp(\frac{10 \log N}{\log\log N})
\sum_v R(v)^2  
$$
(see \cite{BS} for an essentially optimal refinement).
Now 
$$ \sum R(v)^2 = \#\{m_i,n_j\leq N, m_1\neq n_1, m_2\neq m_2 : a(m_1)-a(n_1)=a(m_2)-a(n_2)\}
$$
is at most  the additive energy $E(\mathcal A,N)$. Thus 
$$ \Var D(N,M)\ll \frac{1}{M} N^{\epsilon}E(\mathcal A,N)$$
as claimed. 
\end{proof}

\begin{corollary}\label{almost sure D}
Assume that the additive energy satisfies $E(\mathcal A,N)<N^{2+o(1)}$. If $M<N^{2-\eta}$,  then for almost all $\alpha$, 
$$ D_{\mathcal A}(N,M)(\alpha)\sim \frac{N^2}{M} \;.$$
\end{corollary}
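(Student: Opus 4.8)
The plan is to run the standard second-moment argument, using the expected value already computed in \eqref{E(D)} together with Proposition~\ref{prop var}. Interpret $M=M(N)$ as an (increasing) scale with $M(N)\le N^{2-\eta}$. The first moment is pinned down by \eqref{E(D)}: $\E D_{\mathcal A}(N,M)=N(N-1)/M\sim N^2/M$. Feeding the hypothesis $E(\mathcal A,N)\ll N^{2+o(1)}$ into Proposition~\ref{prop var} gives $\Var D_{\mathcal A}(N,M)\ll N^{2+o(1)}/M$, so the relative variance is
\[
\frac{\Var D_{\mathcal A}(N,M)}{\left(\E D_{\mathcal A}(N,M)\right)^2}\ll \frac{N^{2+o(1)}/M}{N^4/M^2}=\frac{M\,N^{o(1)}}{N^2}\ll N^{o(1)-\eta},
\]
using $M<N^{2-\eta}$. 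In particular this tends to $0$, and Chebyshev's inequality yields, for each fixed $\epsilon>0$, a bound $\Prob\left\{\alpha:\left|D_{\mathcal A}(N,M)(\alpha)/\E D_{\mathcal A}(N,M)-1\right|>\epsilon\right\}\ll_\epsilon N^{-\delta}$ for some $\delta=\delta(\eta)>0$ (any $\delta<\eta$ works for large $N$).

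This already proves convergence in measure, but the corollary asserts an almost-everywhere asymptotic valid as $N\to\infty$, so I would upgrade it by passing to a subsequence. Choosing $N_j=\lceil j^{2/\delta}\rceil$ makes $\sum_j N_j^{-\delta}<\infty$, so by Borel--Cantelli, for almost every $\alpha$ (and every $\epsilon$ from a fixed countable sequence tending to $0$) one has $D_{\mathcal A}(N_j,M(N_j))(\alpha)\sim N_j^2/M(N_j)$ as $j\to\infty$. Note that here $N_{j+1}/N_j\to 1$, and correspondingly $M(N_{j+1})/M(N_j)\to 1$.

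The remaining, and in my view the only delicate, point is to interpolate to all $N$ with $N_j\le N< N_{j+1}$, where both the point count $N$ and the scale $M=M(N)$ vary within a block. Here I would use monotonicity in \emph{both} parameters: since $F_M\ge 0$, the count $D_{\mathcal A}(N,M)$ is non-decreasing in $N$ for fixed $M$; and if the window $f$ is taken unimodal (non-increasing in $|x|$, which one may assume after comparing a general $f$ to unimodal majorant/minorant windows), then on $[-1/2,1/2]$ one has $F_M(x)=f(Mx)$, which is non-increasing in $M$ for each fixed $x$, so $D_{\mathcal A}(N,M)$ is non-increasing in $M$ for fixed $N$. These two monotonicities sandwich $D_{\mathcal A}(N,M(N))$ between $D_{\mathcal A}(N_j,M(N_{j+1}))$ and $D_{\mathcal A}(N_{j+1},M(N_j))$. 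Since $N_{j+1}/N_j\to 1$ and $M(N_{j+1})/M(N_j)\to 1$, the expected values at all of these endpoint scales are asymptotic to $N^2/M(N)$, and the almost-sure asymptotic at the (countably many) endpoint pairs $(N_j,M(N_i))$, $i\in\{j,j+1\}$, follows from the same Chebyshev plus Borel--Cantelli input; the asymptotic then propagates to every $N$ in the block. I expect this joint monotonicity/interpolation step to be the main obstacle, since the moment estimates themselves are immediate from Proposition~\ref{prop var}.
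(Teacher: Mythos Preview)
Your argument is correct and follows essentially the same route as the paper: bound the relative variance via Proposition~\ref{prop var}, apply Borel--Cantelli along a polynomially growing subsequence $N_j$, and then interpolate to all $N$ using monotonicity. The one genuine difference is in the interpolation step. The paper sandwiches only in the $N$-variable, using $D(N_k,M)\le D(N,M)\le D(N_{k+1},M)$ with $M$ held fixed across the block; you additionally sandwich in $M$ by taking $f$ unimodal so that $D(N,M)$ is non-increasing in $M$. Your two-parameter sandwich is arguably more transparent when $M=M(N)$ genuinely varies, but it comes at the price of two tacit regularity hypotheses you do not prove: that $M(\cdot)$ is increasing, and that $M(N_{j+1})/M(N_j)\to 1$. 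Neither follows from the bare constraint $M(N)<N^{2-\eta}$; they do hold for the intended choice $M(N)=\tfrac12 N^{2-\eta}$ used in the deduction of Theorem~\ref{thm min}, so for the application this is harmless. (A small slip: the endpoint pairs you need are $(N_j,M(N_{j+1}))$ and $(N_{j+1},M(N_j))$, not just $(N_j,M(N_i))$ for $i\in\{j,j+1\}$; but these are still countably many and the same Borel--Cantelli input handles them.)
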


\begin{proof}
Take $N_k = \lfloor k^{4/\eta }\rfloor$, so that $\sum_k N_k^{-\eta/2}<\infty$, 
and pick any $M_k <N_k^{2-\eta}$, then  we find from Proposition~\ref{prop var}
\begin{equation*}
\begin{split}
 \sum_k \int_0^1 \Big| \frac{D(N_k,M_k)(\alpha)}{N_k(N_k-1)/M_k}-1 \Big|^2 d\alpha &= 
\sum_k \frac{\Var D(N_k,M_k)}{ (N_k(N_k-1)/M_k)^2} 
\\
&<\sum_k N_k^{o(1)} \frac{E(N_k)M_k}{N_k^4}\ll
\sum_k \frac 1{N_k^{ \eta/2} }<\infty
\end{split}
\end{equation*}
and so for almost all $\alpha$
\begin{equation}\label{ae for D_k}
D(N_k,M_k)(\alpha) \sim \frac{N_k^2}{M_k}, \quad \forall k>k_0(\alpha)  \;.
\end{equation}
Apriori the set depends on the test function $f$, but that can be taken care of by a standard diagonalization procedure;  for our purposes we only need one test function.

Given $N\gg 1$, there is a unique value of $k$ so that $N_k\leq N<N_{k+1}$. 
Note that $N/N_k = 1+O(N^{-\eta/4})$. 
Since $M<N^{2-\eta}\sim N_k^{2-\eta}<N_{k+1}^{2-\eta}$, we know   \eqref{ae for D_k} that almost surely $D(N_k,M)/(N_k^2/M) \to 1$. 

Note that 
\begin{equation*}\label{lem:monotonicity}
D(N_k, M)\leq D(N,M)  \leq D(N_{k+1},M) \;.
\end{equation*}
This is because the sums $D(N,M)$ consist of non-negative terms, and hence 
\begin{multline*}
D(N,M) = \sum_{1\leq m\neq n\leq N} F_M(\alpha(a(m)-a(n))) 
\\
\geq  \sum_{1\leq m\neq n\leq N_k} F_M(\alpha(a(m)-a(n))) =D(N_k,M)
\end{multline*}
(we dropped all pairs $(m,n) $ where $\max(m,n)>N_k$).

Since  $N/N_k = 1+O(N^{-\eta/4})$, we have  
$$ 
\frac{D(N_k,M)}{N_k^2/M} \leq \frac{D(N,M) }{N^2/M(1+O(N^{-\eta/4}))} = \frac{D(N,M)}{N^2/M}(1+O(N^{-\eta/4}) )
$$
and likewise 
$$ \frac{D(N,M)}{N^2/M}\leq \frac{D(N_{k+1},M)}{N_{k+1}^2/M} (1+O(N^{-\eta/4})) \;.
$$
Since we know that almost surely $D(N_k,M)/(N_k^2/M) \to 1$, we deduce that almost surely also $ D(N,M)/(N^2/M)\to 1$. 
\end{proof}

\begin{corollary}
 Theorem~\ref{thm min} holds.
\end{corollary}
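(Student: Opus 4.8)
The plan is to read off the conclusion directly from Corollary~\ref{almost sure D}, exploiting that $D_{\mathcal A}(N,M)(\alpha)$ is a sum of nonnegative terms which is positive exactly when the sequence $\mathcal A^\alpha$ possesses at least one pair of points lying within $1/(2M)$ of each other. First I would pin down the support of the window. Since $f$ is supported in $[-1/2,1/2]$, the summand $f(M(x+j))$ vanishes unless $|x+j|\le 1/(2M)$; hence $F_M(x)\neq 0$ forces $\|x\|\le 1/(2M)$, where $\|\cdot\|$ denotes the distance to the nearest integer. Because every term of $D_{\mathcal A}(N,M)(\alpha)$ is nonnegative, if $D_{\mathcal A}(N,M)(\alpha)>0$ then some pair $m\neq n$ with $m,n\le N$ satisfies $F_M(\alpha(a(m)-a(n)))>0$, and therefore $\|\alpha a(m)-\alpha a(n)\|\le 1/(2M)$. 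The circle distance between the points $\alpha a(m)\bmod 1$ and $\alpha a(n)\bmod 1$ is then at most $1/(2M)$, so that $\delta_{\min}^\alpha(N)\le 1/(2M)$.

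Next I would fix the localization scale just below the critical exponent. Given $\eta>0$, set $M=M_N:=\lfloor N^{2-\eta/2}\rfloor$, so that $M_N<N^{2-\eta/2}$, and apply Corollary~\ref{almost sure D} with $\eta/2$ in place of $\eta$; the hypothesis $E(\mathcal A,N)<N^{2+o(1)}$ is exactly what that corollary requires. It yields, for almost all $\alpha$, that $D_{\mathcal A}(N,M_N)(\alpha)\sim N^2/M_N$ for all $N>N_0(\alpha)$. Since $N^2/M_N\gg N^{\eta/2}\to\infty$, this in particular gives $D_{\mathcal A}(N,M_N)(\alpha)>0$ for all large $N$, so by the previous paragraph $\delta_{\min}^\alpha(N)\le 1/(2M_N)$ for almost all $\alpha$ and all $N>N_0(\alpha)$.

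It remains to compare scales. We have $1/(2M_N)<1/N^{2-\eta}$ precisely when $N^{2-\eta}<2M_N$, and since $2M_N\sim 2N^{2-\eta/2}$ while $2N^{2-\eta/2}/N^{2-\eta}=2N^{\eta/2}\to\infty$, this inequality holds for all sufficiently large $N$. Hence for almost all $\alpha$ we obtain $\delta_{\min}^\alpha(N)<1/N^{2-\eta}$ for all $N>N_0(\alpha)$, which is Theorem~\ref{thm min}. I expect the only genuine subtlety to be this mismatch of scales: Corollary~\ref{almost sure D} manufactures small gaps only for $M$ strictly below the critical size $N^{2-\eta}$, whereas the target bound wants $M$ as large as possible. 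This is resolved harmlessly by the cost-free replacement $\eta\mapsto\eta/2$, since $\eta>0$ is arbitrary; beyond this bookkeeping the deduction is immediate.
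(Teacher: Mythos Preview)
Your proof is correct and follows essentially the same route as the paper: invoke Corollary~\ref{almost sure D} to make $D_{\mathcal A}(N,M)(\alpha)$ large, then use the support of $f$ to deduce a gap of size at most $1/(2M)$. The only cosmetic difference is that the paper takes $M=\tfrac12 N^{2-\eta}$ directly (which already satisfies $M<N^{2-\eta}$ and gives $1/(2M)=1/N^{2-\eta}$), so the detour through $\eta\mapsto\eta/2$ is unnecessary, though harmless.
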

\begin{proof}
Fix $\eta>0$, and let $M=\frac 12 N^{2-\eta}$. Since $D(N,M)(\alpha) \sim \frac{N^2}{M} >1$ by Corollary~\ref{almost sure D}, 
we have a gap of size at most $1/(2M)=1/N^{2-\eta}$, that is $\delta_{\min}^\alpha(N)<1/N^{2-\eta}$ almost surely. 
\end{proof}

\section{Lower bounds: Proof of Theorem~\ref{thm delta big}}\label{sec:lower}
We take any sequence of integers $\mathcal A = \{a(n)\}$ with distinct elements. We want to show that for any $\eta>0$, almost surely, 
$$\delta^\alpha(N)>1/N^{2+\eta}, \quad \forall N>N_0(\alpha)\;.
$$
Let $N_k = \lfloor k\rfloor^{2/\eta}$. We claim that it suffices to show that for almost all $\alpha$, 
\begin{equation}\label{delta small}
\delta_{\min}^\alpha(N_k)>2/N_k^{2+\eta}, \quad \forall k>k_0(\alpha)\;.
\end{equation}

Indeed, note that if $N_k\leq N<N_{k+1}$ then $\delta_{\min}^\alpha(N)\geq \delta_{\min}^\alpha(N_{k+1})$. 
Since $N_{k+1}\sim N$, by \eqref{delta small} we have for almost all $\alpha$  
$$ \delta_{\min}^\alpha(N)\geq \delta_{\min}^\alpha(N_{k+1})>2/N_{k+1}^{2+\eta}>1/N^{2+\eta}$$
for $N>N_0(\alpha)$.

To prove \eqref{delta small}, it suffices,  by the Borel-Cantelli lemma,  to show that 
\begin{equation}\label{sum prob delta}
\sum_k \Prob\Big(\delta_{\min}^\alpha(N_k)\leq 2/N_k^{2+\eta} \Big) <\infty\;.
\end{equation}

In the definition of $D(N,M)=D_f(N,M)$, choose $f$ so that $f(x)\geq 1$ if $|x|\leq 1/4$ (and in addition, $f\geq 0$ is non-negative, $\intinf f(x)dx=1$, $f$ is smooth and supported in $[-1/2,1/2]$). 
Now note that for such $f$, if $D_f(N,M)<1$ then $\delta_{\min}^\alpha(N)>1/(4M)$. This is because $D_f(N,M)$ is a sum of non-negative terms, and if there is one gap of size $\leq 1/(4M)$ then the corresponding term $F_M(\alpha(a(m)-a(n)) = \sum_j f(M(\alpha(a(m)-a(n))+j)\geq 1$ by the choice of $f$, so that $D_f(N,M)\geq 1$. Thus we find that 
$$
\delta_{\min}^\alpha(N)\leq \frac 1{4M} \quad  \Rightarrow \quad D_f(N,M)\geq 1 
$$
and hence 
\begin{equation}\label{bd for prob delta}
\Prob\Big(\delta_{\min}^\alpha(N )\leq \frac 1{4M}\Big) \leq \Prob\Big( D_f(N, M ) \geq 1\Big) \;.
\end{equation}

Now since $D_f\geq 0$, 
$$
\Prob\Big( D_f(N ,M ) \geq  1 \Big) \leq \int_0^1 D_f(N , M)(\alpha)d\alpha
$$
so that by \eqref{E(D)}  for $M_k=\dfrac 18 N_k^{2+\eta}$, 
$$  \int_0^1 D(N_k,M_k)(\alpha)d\alpha \sim 8N_k^{-\eta}\ll \frac 1{k^2}$$
which together with \eqref{bd for prob delta} proves \eqref{sum prob delta}, hence \eqref{delta small}. 
This proves Theorem~\ref{thm delta big}.

\section{Minimal gaps for the primes}\label{sec:dense}

Let $a(n) = p_n$, the $n$-th prime. By Khinchin's theorem, for all $\varepsilon>0$  there is a set of full measure of $\alpha$'s so that 
$|| q\alpha ||\geq 1/(q(\log q)^{1+\varepsilon})$ for any integer $q\geq q_0(\alpha)$.  
In particular, for such $\alpha$, the gap between fractional parts of $\alpha p_n \bmod 1$ are
$$ ||\alpha(p_m-p_n)|| \gg \frac 1{|p_m-p_n|(\log |p_m-p_n|)^{1+\varepsilon}} \geq \frac 1{ N(\log N)^{2+\varepsilon}} \;,
 $$
since $|p_m-p_n|\leq p_N\sim N\log N$ for $m<n\leq N$. Hence  for such $\alpha$, the minimal gap satisfies 
$\delta_{\min}^\alpha(N)>1/N( \log N)^{2+\varepsilon}$. 

A similar argument applies to other dense cases, such as the sequence of squarefree integers.  
An extreme case is that when $\mathcal A=\mathbb N$ is the sequence of all natural numbers. The argument above gives the minimal gap here is, for almost all $\alpha$, at least $\delta_{\min}^\alpha(N)\gg 1/(N(\log N)^{1+\varepsilon})$. 
Note that in this case the ``three-gap" theorem shows that there are at most three distinct gaps between the fractional parts 
$\{\alpha n\bmod 1:n\leq N\}$. 
Concerning other ``dense" sequences, it is known that for any sequence of integers $\mathcal A \subset [1,M]$, the  fractional parts 
$\alpha a(m)\bmod 1$ have at most $O(\sqrt{M})$ distinct gaps \cite{VZ, BGS}.

\end{document}